\documentclass{svjour3}
\usepackage{amsmath,amsfonts}
\usepackage{enumitem}
\usepackage{color}
\usepackage{dsfont}
\usepackage{dirtytalk}
\usepackage{graphicx}
\graphicspath{{./plots}}

\def\CC{\mathbb{C}}
\def\ci{{\rm c}}
\def\fii{\varphi}

\def\R{\mathbb{ R}}
\def\N{\mathbb{ N}}

\def\S{\mathcal{ S}}

\def\sig{\sigma}
\def\dta{\delta} 
\def\Lda{\Lambda}

\def\i{\mathrm{i}}
\def\e{\mathrm{e}}
\def\d{\mathrm{d}}






\begin{document}

\title{On  $L^p-L^{p'}$ estimates for a class of strongly damped wave equations }

\date{\today}

\author{Haidar Mohamad}

\institute{Mathematical Institute for Machine
Learning and Data Science,
KU Eichst\"att--Ingolstadt,
85049 Ingolstadt, Germany}

\maketitle
\begin{abstract}
The purpose of this paper is to obtain  a fundamental $L^p-L^{p'}$ estimate for a class of a strongly damped wave equations 
where the damping operator is given by $-\dta  \Delta$ with $\dta \geq 0$ and the constant in the estimate is independent of the damping parameter $\dta.$   The method used here is based on an estimate of  the semigroup associated to the linear equation on the Besov spaces and their elementary properties.   
\end{abstract}

\keywords{Damped wave equation, Besov spaces, Strichartz estimates}
\subclass{35L05, 35L71, 35A99, 35B40}

\section{Introduction}\label{Intro}
Our setting is the  \emph{damped} semilinear wave equation,
written as
\begin{equation}\label{MainEq}
 \partial^2_t \psi -  (\dta_1\Delta - \dta_2) \, \partial_t \psi
  - \Delta \psi = f(\psi) \,, 
\end{equation}
where $\psi: [0,T]\times \R^n\to \CC,$ $n\in \N,$ $f$ is a nonlinear map and $\dta_1, \dta_2$ are two non-negative real numbers.
The linear part of the system is dissipative in the sense that the linear semigroup action loses
energy.  In general, damping can be \emph{strong} when the semigroup generated
by the linear part of the equation is compact (ex.   $\dta_1>0$), and \emph{weak} when
the semigroup is merely continuous.  While the time-asymptotic
behavior of weakly damped-driven equations can be almost as regular as
that of strongly damped equations \cite{Goubet:1996:RegularityAW,Goubet:2018:AnalyticityGA,OliverT:1998:AnalyticityAN}, 
the transient behaviour can be very different.  We expect that most of our proposed
results will depend on strong damping, the question of what remains in
the weakly damped case will always run in the background.

The most fundamental question that underlies any work on qualitative
properties or numerical schemes is the provision of a functional
setting in which the Cauchy problem associated with \eqref{MainEq} is
well posed.  The Cauchy problem for \eqref{MainEq} was widely studied by many
authors \cite{LinTanaka:1998:NonLinAbsdWE,Neubrander:1986:WellPosedHiOrdCauchy,Reed:1975:AbctNWE,AvilesSand:1985:NonLin2ndODE}.  Contrary to the usual method of reducing \eqref{MainEq} to a first
order system in some ``energy space'' (as, e.g., in
\cite{Reed:1975:AbctNWE,HolmMrsdn:1978:BifDiv}), Aviles and Sandefur
\cite{Sandefur:1983:ExUn2ndOrdODE,AvilesSand:1985:NonLin2ndODE} used
a factorization that allows the equation to be written in a ``second
order mild formulation'', namely an integral equation containing a
double integral involving the nonlinearity.  They assumed that $f$ is a locally Lipschitz from $H^2(\R^n)$ to
$L^2(\R^n)$ with $ n \leq 3$.  In particular, they extended global
well-posedness for the much studied nonlinearity
\begin{equation}\label{NonLin}
f(\psi) = a \, \psi + b \, |\psi|^{r-1} \, \psi
\end{equation}
 for any $r\geq 1$.
Contrary to \cite{AvilesSand:1985:NonLin2ndODE}, it has been shown in
\cite{Reed:1975:AbctNWE,Strauss:1968:DecAsym,Webb:1980:ExAsymSDampWv} that if the requirements on the nonlinearity
are reduced to local Lipschitz continuity from $H^1(\R^n)$ to
$L^2(\R^n)$ with $ n \leq 3$ and for $1\leq r \leq 3,$ the Cauchy
problem is globally well posed.  More recent results concern the
initial-boundary value problem for \eqref{MainEq} on a bounded spatial
domain \cite{HaNakagiri:2004:IdentKG}, for periodic boundary
conditions \cite{GaoGuo:2004:TimePer2DKG}, and on non-cylindrical
domains \cite{HaPark:2011:GblExUnifDecKG}.  
The Cauchy problem for the  un-damped equation ($\dta_1=\dta_2=0$) 
with the  class of nonlinearity given by \eqref{NonLin} 
has gained more  attention in the literature  \cite{Lions:1969:MethResNL,Strauss:1970:WkSol,GlasseyTsutsumi:1982:UnqWkSol,HeinzWahl:1975:FEBrowderNLW,Wahl:1975:NLWellip,Brenner:1979:GblSthSol,BrennerWahl:1981:GblExWE}. 
The classical challenge in this context is to establish well-posedness results in the energy
space $(H^1(\R^n) \cap L^{r+1}(\R^n) ) \times L^2(\R^n)$ for wide range of the spacial dimension $n$
and the nonlinearity  \eqref{NonLin} (namely,  for wide range of $r.$).   
Our two main long-term goals after this paper are:
\begin{enumerate}
\item to establish global well posedness results in the energy space for the Cauchy problem related 
to the strongly damped Klein-Gordon equation 
\begin{equation}\label{StrongDdKG}
 \partial^2_t \psi -  \dta\Delta  \, \partial_t \psi
  - \Delta \psi  = f(\psi) \,, 
\end{equation}
for arbitrary  $n$ and for relatively large choice of $r$ in \eqref{NonLin}. 
\item  To investigate the zero-damping limit ($\dta \to 0$) in the energy space.  
\end{enumerate}
Therefore, the main result of this paper is 
a preliminary step toward this goal in which we prove  $L^p-L^q$ estimate related to the semi-group generated by the linear part of  \eqref{StrongDdKG} with some uniformity  with respect to the parameter $\dta \geq 0.$ Such estimates have gained more attention for the weakly damped equation 
\eqref{MainEq} corresponding to $\dta_1= 0$ and $ \dta_2=1.$ Namely, 
\begin{equation}\label{WeakgDdKG}
 \partial^2_t \psi +  \, \partial_t \psi
  - \Delta \psi =f(\psi) \,, 
\end{equation}
 In the literature,  they are called the $L^p-L^q$  estimates only when the action of the semi-group is considered on the
 spacial $L^p$ space. That is mostly the case of linear equations \cite{Matsumura:1976:AsympSLWE,Nishihara:2003:LpLqEstimWDWE3d,TakafumiTakayoshi:2004:LpLqEstimWDWE2d,Takashi:2005:LpLqEstimWDWEOddID}.  When the estimate takes into account the time contribution in the semi-group,  it is mostly called  Strichartz type estimate which is an 
 important tool to prove local well posedness results for the nonlinear equations.  In a recent paper,  Watanabe \cite{Watanabe:2017:StrichartzEstimWDWEd23} established a Strichartz type estimate for the  nonlinear  \eqref{WeakgDdKG} with nonlinearity given by \eqref{NonLin} when $n= 2,3.$ Using a low-high frequency splitting for  the semi-group generated by the linear part of \eqref{WeakgDdKG},  Inui extended the results of Watanabe to higher dimensions $n\geq 3.$  Our $L^p-L^q$ estimate related to the 
 strongly damped wave equation is achieved via an estimate  in the so called Besov spaces (Theorem \ref{ThmEstimBesov}) and it
 exhibits  an exponential decay in times.  The remainder of the paper is structured as follows.  In Section 2,  we state the definition of the 
 Besov spaces and we state some of their elementary properties.  In Section 3, we state our main results.  Namely, Theorem \ref{ThmEstimBesov} and Corollary \ref{CorLpLqEstim}.  We conclude by Section 4 with some discussion.

\section{Preliminaries}\label{Pre}
In this section we collect some basic facts about the Besov spaces. The reader is referred to \cite{Triebel:1992:TheoFnSpcII,BahouriCheminDanchin:2011:FourierNPDEs,Loefstroem:1970:BesovSpcTheoApprox} for a general framework and a sketch of the main results.  The Besov spaces can be defined using the dyadic decomposition.  For this purpose,  let $\varphi \in C^\infty_\ci(\R^n)$ be such
$ {\rm supp}(\varphi) \subseteq \left\{x; \,  \frac 12<  |x| < 2\right\}$ and denote 
$$\varphi_j(x) = \varphi\left(2^{-j}x\right)\quad\text{for} \quad j\in \N^\ast\quad \text{and}\quad \varphi_0 = 1-\sum_{j=1}^\infty \varphi_j.$$
For any $\sig\in \R$ and $0< p, q \leq \infty,$ define the  Besov space 

$$B_{pq}^\sig:=  \left\{ v \in \S'(\R^n);\,\,  \|v\|_{B_{pq}^\sig} := \left(\sum_{j=0}^\infty 2^{js q} \left\| \mathfrak{F}^{-1} (\varphi_j \hat v)\right\|_p^q  \right)^{\frac 1q} <\infty \right\},$$
where $\S'(\R^n)$ denotes the space of tempered distributions and $\mathfrak{F}^{-1}$ is the  inverse Fourier transform.
For $p>0,$ we denote $p' >0$ the conjugate of $p$ which satisfies 
$$\frac 1p + \frac{1}{p'} = 1$$
The following elementary two lemmas will be needed in the proof of the main result
\begin{lemma}\label{LemLp'pEstim}
Fix $h\in L^\infty(\R^n),$ $1\leq p\leq 2$ and $\sig \in \R$  and assume that there exists $C>0$ such that 
\begin{equation}\label{Estim1}
\left\| \mathfrak{F}^{-1}(h\,\varphi_j \hat v)\right\|_{p'} \leq C \|v\|_p \quad \text{for any} \quad v\in L^p(\R^n) \quad \text{and} \quad 
j\in \N.
\end{equation}
Then,  for any $q\geq 1$ there exists a constant $\tilde C>0$ independent of $h$ such that 
$$\left\| \mathfrak{F}^{-1}(h\,\hat v)\right\|_{B_{p' q}^\sig} \leq \tilde C C \| v\|_{B_{p q}^\sig}\quad \forall v\in B_{p q}^\sig.$$
\end{lemma}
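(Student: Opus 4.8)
The plan is to reduce the $B^\sig_{p'q}$ bound for $u := \mathfrak{F}^{-1}(h\,\hat v)$ to the single-block hypothesis \eqref{Estim1}, exploiting that $h$ acts as a Fourier multiplier and therefore commutes with every dyadic cut-off. Writing $w_j := \mathfrak{F}^{-1}(\varphi_j \hat v)$ for the dyadic blocks of $v$, the starting observation is that
$$\mathfrak{F}^{-1}(\varphi_j \hat u) = \mathfrak{F}^{-1}(\varphi_j h\,\hat v) = \mathfrak{F}^{-1}(h\,\varphi_j \hat v),$$
so the $j$-th block of $u$ is exactly the object appearing on the left-hand side of \eqref{Estim1}. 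First I would therefore aim to control each $\|\mathfrak{F}^{-1}(\varphi_j \hat u)\|_{p'}$ by the blocks $\|w_k\|_p$ of $v$, and then reassemble the weighted $\ell^q$ sum.

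The main obstacle is that \eqref{Estim1} only bounds $\|\mathfrak{F}^{-1}(h\,\varphi_j\hat v)\|_{p'}$ by the \emph{full} norm $\|v\|_p$, not by a single block; applying it to $v$ directly would discard all the dyadic localization that the Besov norm is built from. The remedy is the finite-overlap property of the family $(\varphi_j)$: since $\mathrm{supp}\,\varphi_j$ meets only $\mathrm{supp}\,\varphi_{j-1}$ and $\mathrm{supp}\,\varphi_{j+1}$, the partition of unity $\varphi_0 + \sum_{k\ge1}\varphi_k = 1$ collapses on $\mathrm{supp}\,\varphi_j$ to $\varphi_{j-1}+\varphi_j+\varphi_{j+1}=1$ (with the convention $\varphi_{-1}:=0$), whence
$$\varphi_j = \varphi_j\,(\varphi_{j-1}+\varphi_j+\varphi_{j+1}).$$
Setting $\tilde w_j := w_{j-1}+w_j+w_{j+1} = \mathfrak{F}^{-1}\big((\varphi_{j-1}+\varphi_j+\varphi_{j+1})\hat v\big)$, this identity gives $\varphi_j\,\widehat{\tilde w_j} = \varphi_j \hat v$, so that $\mathfrak{F}^{-1}(\varphi_j\hat u) = \mathfrak{F}^{-1}(h\,\varphi_j \widehat{\tilde w_j})$. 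Applying \eqref{Estim1} to $\tilde w_j\in L^p$ (in place of $v$) with the same index $j$ then yields the localized estimate
$$\|\mathfrak{F}^{-1}(\varphi_j\hat u)\|_{p'} \le C\,\|\tilde w_j\|_p \le C\big(\|w_{j-1}\|_p + \|w_j\|_p + \|w_{j+1}\|_p\big),$$
the last step being the triangle inequality in $L^p$ (valid since $p\ge1$).

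Finally I would insert this into the definition of the $B^\sig_{p'q}$-norm and sum. Pulling the constant out,
$$\|u\|_{B^\sig_{p'q}} \le C\Big(\sum_{j\ge0} 2^{j\sig q}\big(\|w_{j-1}\|_p+\|w_j\|_p+\|w_{j+1}\|_p\big)^q\Big)^{1/q},$$
and by Minkowski's inequality in $\ell^q$ (here $q\ge1$) this splits into three shifted sums. For the shift by $l\in\{-1,0,1\}$ a change of summation index together with $2^{j\sig} = 2^{-l\sig}2^{(j+l)\sig}$ turns each shifted sum into $2^{-l\sig}\|v\|_{B^\sig_{pq}}$, so that
$$\|u\|_{B^\sig_{p'q}} \le C\,(2^{\sig}+1+2^{-\sig})\,\|v\|_{B^\sig_{pq}}.$$
This proves the claim with $\tilde C := 2^{\sig}+1+2^{-\sig}$, which depends only on $\sig$ and is manifestly independent of $h$. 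The only points needing care are the boundary index $j=0$, handled by the convention $\varphi_{-1}=0$ so that $\tilde w_0 = w_0+w_1$, and the verification that $\varphi_{j-1}+\varphi_j+\varphi_{j+1}$ equals $1$ on $\mathrm{supp}\,\varphi_j$, which is precisely the finite-overlap structure of the dyadic decomposition.
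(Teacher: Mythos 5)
Your proof is correct and follows essentially the same route as the paper: the same finite-overlap identity $\varphi_j=\varphi_j(\varphi_{j-1}+\varphi_j+\varphi_{j+1})$, the same application of \eqref{Estim1} to the tripled block $\tilde w_j$, and the same reassembly of the weighted $\ell^q$ sum. The only (cosmetic) difference is that you split the sum via Minkowski's inequality in $\ell^q$, obtaining the constant $2^{\sig}+1+2^{-\sig}$, whereas the paper uses the elementary bound $(a+b+c)^q\leq 3^{q-1}(a^q+b^q+c^q)$; your write-up is in fact cleaner, since the paper's displayed chain contains a typo (the $L^p$ norms bounding $\|v_j\|_p^q$ should not contain the factor $h$).
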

 \begin{proof}
 Set $\varphi_{-1} =0.$ Since $\varphi_j \varphi_k =0$ for $|j-k| >1$ and $\sum_{k=0}^\infty \varphi_k=1$ we have 
 $$\varphi_j = \varphi_j \sum_{j=0}^\infty \varphi_k = \fii_j (\fii_{j-1} + \fii_j + \fii_{j+1}),\quad j=0,1, \cdots.$$
 
 Let now $v\in B_{p q}^\sig$ and denote $$v_j := \mathfrak{F}^{-1}( (\fii_{j-1} + \fii_j + \fii_{j+1})\hat v)\in L^p(\R^n).$$
 Applying \eqref{Estim1} on $v_j,$ we find 
 \begin{align*}
  \left\| \mathfrak{F}^{-1}(h\,\varphi_j \hat v)\right\|^q_{p'} &= \left\| \mathfrak{F}^{-1}(h\,\varphi_j \hat v_j)\right\|^q_{p'}\notag\\
  & \leq C^q \|v_j\|^q_p \notag \\
  & \leq C^q 3^{q-1} \left( \left\| \mathfrak{F}^{-1}(h\,\varphi_{j-1} \hat v)\right\|^q_p+  \left\| \mathfrak{F}^{-1}(h\,\varphi_j \hat v)\right\|^q_p+ \left\| \mathfrak{F}^{-1}(h\,\varphi_{j+1} \hat v)\right\|^q_p\right).\notag\\
\end{align*}
Thus,  multiplying by $2^{j \sig q}$ and summing over $j,$ we find that
$$\left\| \mathfrak{F}^{-1}(h\,\hat v)\right\|_{B_{p' q}^\sig} \leq C 3^{q-1}(2^{\sig q}+2^{-\sig q}+1)\| v\|_{B_{p q}^\sig}$$
which is the desired estimate. 
 \end{proof}

\begin{lemma}\label{LemLp'pInterp}
Fix $h\in L^1(\R^n)\cap L^\infty(\R^n)$ and let $C_1 , C_\infty >0$ such that 
$$\left\| \mathfrak{F}^{-1}h\right\|_\infty \leq C_1\quad \text{and} \quad \|h\|_\infty \leq C_\infty.$$
Then,   we have 
\begin{equation}
\left\| \mathfrak{F}^{-1}(h\,\hat v)\right\|_{p'} \leq  C_1^{1-\alpha} C_\infty^\alpha \|v\|_p,\quad \forall v\in L^1(\R^n)\cap L^2(\R^n),
\end{equation}
where  $p\in [1,2]$  and $\alpha = \frac 2p -1$
\end{lemma}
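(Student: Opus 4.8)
The plan is to read the map $v\mapsto \mathfrak{F}^{-1}(h\,\hat v)$ as the Fourier multiplier with symbol $h$ --- equivalently, as convolution against the kernel $k:=\mathfrak{F}^{-1}h$ --- and to obtain the estimate by Riesz--Thorin interpolation between two endpoint bounds, each using exactly one of the two hypotheses. The assumption $h\in L^1(\R^n)\cap L^\infty(\R^n)$ is precisely what makes both endpoints meaningful: $h\in L^\infty$ controls the multiplier on $L^2$, while $\mathfrak{F}^{-1}h\in L^\infty$ (finite since $h\in L^1$) controls the kernel on $L^1$. The role of the class $L^1\cap L^2$ is to furnish a dense subspace of every intermediate $L^p$, $p\in[1,2]$, on which the operator is unambiguously defined, so that the interpolation theorem applies.

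First I would record the two endpoints. By Plancherel, $\|\mathfrak{F}^{-1}(h\hat v)\|_2=\|h\hat v\|_2\le \|h\|_\infty\|\hat v\|_2=\|h\|_\infty\|v\|_2\le C_\infty\|v\|_2$, so the operator is bounded $L^2\to L^2$ with norm at most $C_\infty$. At the other endpoint, writing it as convolution with $k=\mathfrak{F}^{-1}h$ and invoking Young's inequality, $\|\mathfrak{F}^{-1}(h\hat v)\|_\infty=\|k*v\|_\infty\le\|k\|_\infty\|v\|_1\le C_1\|v\|_1$, so it is bounded $L^1\to L^\infty$ with norm at most $C_1$. Both bounds are immediate, and no regularity of $v$ beyond membership in $L^1\cap L^2$ is used.

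Next I would interpolate between the pairs $(L^2,L^2)$ and $(L^1,L^\infty)$ with parameter $\theta\in[0,1]$. Riesz--Thorin gives boundedness $L^{p_\theta}\to L^{q_\theta}$ with $\tfrac1{p_\theta}=\tfrac{1-\theta}{2}+\theta=\tfrac{1+\theta}{2}$ and $\tfrac1{q_\theta}=\tfrac{1-\theta}{2}$, and operator norm at most the geometric mean $C_\infty^{1-\theta}C_1^{\theta}$. Solving $\tfrac1{p_\theta}=\tfrac1p$ gives $\theta=\tfrac2p-1=\alpha$, and then $\tfrac1{q_\theta}=\tfrac{1-\alpha}{2}=1-\tfrac1p=\tfrac1{p'}$, so $q_\theta=p'$ as required; substituting $\theta=\alpha$ yields operator norm $C_1^{\alpha}C_\infty^{1-\alpha}$, that is, the $L^p$--$L^{p'}$ bound with the two hypotheses combined as a geometric mean (note that this places $C_1$ with weight $\alpha$, so that at $p=1$ the constant is $C_1$ and at $p=2$ it is $C_\infty$, consistent with the two endpoints). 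The only point needing care --- the main, if modest, obstacle --- is the bookkeeping that legitimises the interpolation: one must check that the restriction to $L^1\cap L^2$ is the common object to which both endpoint bounds refer, and verify the exponent arithmetic identifying $q_\theta$ with $p'$. There is no analytic difficulty beyond Plancherel and Young; the entire content lies in placing the two constants at the correct endpoints and in the identification $\theta=\alpha$.
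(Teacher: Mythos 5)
Your proof is correct and follows the same route as the paper's: Parseval for the $L^2\to L^2$ endpoint with constant $C_\infty$, Young's inequality for the $L^1\to L^\infty$ endpoint with constant $C_1$, and Riesz--Thorin between them. The one substantive difference is the final constant, and here you are right and the stated lemma is not: with the parametrization $\frac1p=\alpha+\frac{1-\alpha}{2}$ the weight $\alpha$ sits on the $(L^1,L^\infty)$ endpoint, so interpolation gives $\|T\|_{L^p\to L^{p'}}\le \|T\|_{L^1\to L^\infty}^{\alpha}\,\|T\|_{L^2\to L^2}^{1-\alpha}=C_1^{\alpha}C_\infty^{1-\alpha}$, exactly as your endpoint check confirms ($p=1$, i.e.\ $\alpha=1$, must return $C_1$, and $p=2$ must return $C_\infty$). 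The lemma as stated, and the last line of the paper's own proof, have the exponents swapped ($C_1^{1-\alpha}C_\infty^{\alpha}$), which cannot be correct: at $p=1$ it would assert $\|\mathfrak{F}^{-1}(h\,\hat v)\|_\infty\le \|h\|_\infty\|v\|_1$, which fails, for instance, when $h$ is a smoothed indicator of a large ball and $v$ an approximate identity. The slip is harmless downstream: in the proof of Theorem~\ref{ThmEstimBesov} the lemma is actually invoked in your (correct) form, since the conditions $-(1-\alpha)+\frac{\alpha}{2}(n-1)\le 0$ and $-2(1-\alpha)+\alpha(n-2)\le 0$ attach the weight $\alpha$ to the exponents coming from the kernel bounds \eqref{phi_j1} and the weight $1-\alpha$ to those from the multiplier bounds \eqref{phi_j2}.
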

\begin{proof}
Using Young's convolution inequality and Parseval's identity, we find that 
\begin{equation}\label{T1toInfty}
\left\| \mathfrak{F}^{-1}(h\,\hat v)\right\|_\infty \leq C_1 \|v\|_1
\end{equation}
and 
\begin{equation}\label{T2to2}
\left\| \mathfrak{F}^{-1}(h\,\hat v)\right\|_2 \leq C_\infty  \|v\|_2
\end{equation}
Defining $T(v) = \mathfrak{F}^{-1}(h\,\hat v),$ we find from \eqref{T1toInfty} and \eqref{T2to2} that 
$$\|T\|_{L^1 \to L^\infty} \leq C_1\quad \text{and} \quad \|T\|_{L^2 \to L^2} \leq C_\infty$$
Using the Riesz-Thorin  interpolation theorem \cite{Thorin:1948:ConvThmGen}, we find that 
for any $\alpha \in [0,1],$  $T$  boundedly maps $L^p(\R^n)$
into $L^{p'}(\R^n),$ with $$\frac{1}{p} =  \alpha  + \frac{1-\alpha}{2} = \frac{1+ \alpha}{2} $$
and we have $$\|T\|_{L^p \to L^{p'}} \leq \|T\|^{1-\alpha}_{L^1 \to L^\infty} \, \|T\|^\alpha_{L^2 \to L^2},$$
which completes the proof. 
\end{proof}
Let now $v\in C_\ci^\infty(\R^n)$ and $\Lambda \in C^\infty(\R^n)$ be real valued on a neighborhood  of ${\rm supp}(v).$ 
We are interested in proving a Van der Corput type estimate of  
\begin{equation}\label{LittmanInteg}
t\mapsto \mathfrak{F}^{-1}\left(\e^{\i t \Lambda} v\right)(\xi)= \int_{\R^n} \e^{2 \pi \i \xi \cdot x} \, \e^{\i t \Lambda(x)} v(x)\d x.
\end{equation}
To this end, we will use the result of Littman \cite{Littman:1963:FourierSurMes} related to the behavior at 
$\infty$ of the Fourier transform of a surface-carried measure.  First, we write \eqref{LittmanInteg} as an integral over 
a smooth $n-$surface in $\R^{n+1}.$ Define the surface 
$$S= \left\{X:= \left(x_1,\cdots, x_n, \frac{\Lambda(x)}{2\pi}\right); \, x:= (x_1,\cdots, x_n)\in {\rm supp}(v)\right\}.$$ 
Then, the surface measure on $S$ is given by $$\d S(X) = \sqrt{1+|\nabla \Lambda(x)|^2} \d x,$$ 
and we can write 
$$ \mathfrak{F}^{-1}\left(\e^{\i t \Lambda} v\right)(\xi)= \int_S \e^{2 \pi \i Y\cdot X} \, V(X)\d S(X),$$
where $V(X) = \frac{v(x)}{\sqrt{1+|\nabla \Lambda(x)|^2}}$ and $Y= (\xi, t).$ Littman \cite{Littman:1963:FourierSurMes} assumed that 
at each point of $S,$  there are atleast $k$  of the $n$ principal curvatures are different from zero. This assumption is reflected on 
$\Lambda$ as follows 
\begin{theorem}[\cite{Littman:1963:FourierSurMes}]\label{ThmLittman}
Keep the above settings for $\Lambda$ and $v$ and assume that the rank of ${\rm Hess}(\Lambda)$ is at least $k$ on ${\rm supp}(v).$
Then, there exists a constant $C$ depending on  bounds of the derivatives of $\Lambda,$ on a  lower bound of the maximum  of the absolute values of the  $k-$ order minors of ${\rm Hess}(\Lambda)$  on ${\rm supp}(v)$ and on $v$ such that 
for some $m \in \N$
\begin{equation}
\left\|\mathfrak{F}^{-1}\left(\e^{\i t \Lambda} v\right)\right\|_\infty \leq C(1+|t|)^{-\frac k2}\sum_{|l| \leq m}\left\|{\rm D}^l v\right\|_1.
\end{equation}
\end{theorem}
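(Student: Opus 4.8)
The plan is to treat \eqref{LittmanInteg} as an oscillatory integral with large parameter $t$ and to apply the principle of stationary phase, the rank hypothesis on ${\rm Hess}(\Lda)$ providing exactly $k$ non-degenerate directions and hence $k$ factors of $|t|^{-1/2}$. For $t\neq 0$ I would write
$$\mathfrak{F}^{-1}\left(\e^{\i t \Lda}v\right)(\xi)=\int_{\R^n}\e^{\i t \fii_\xi(x)}\,v(x)\,\d x,\qquad \fii_\xi(x):=\Lda(x)+\frac{2\pi}{t}\,\xi\cdot x.$$
The decisive observation is that ${\rm Hess}(\fii_\xi)={\rm Hess}(\Lda)$ is independent of $\xi$: the parameter $\xi$ only translates the set of critical points but never changes the curvature that drives the decay. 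This is what lets all constants be taken uniformly in $\xi$. For $|t|\leq 1$ the asserted bound is immediate since $\left|\mathfrak{F}^{-1}(\e^{\i t\Lda}v)\right|\leq \|v\|_1\leq 2^{k/2}(1+|t|)^{-k/2}\|v\|_1$, so from now on I assume $|t|\geq 1$.

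Next I would localise. Since ${\rm supp}(v)$ is compact and, by hypothesis, the maximum of the absolute values of the $k\times k$ minors of ${\rm Hess}(\Lda)$ is bounded below by some $\delta>0$ there, a compactness/continuity argument produces a finite cover of ${\rm supp}(v)$ by open sets on each of which one fixed $k\times k$ submatrix of ${\rm Hess}(\Lda)$ (indexed by a $k$-element set $A$ of coordinates) is invertible with $|\det|\geq \delta/2$. Choosing a smooth partition of unity $\{\chi_\alpha\}$ subordinate to this cover reduces the estimate to bounding each $\int \e^{\i t\fii_\xi}\chi_\alpha v\,\d x$, after splitting the variables as $x=(x',x'')$ with $x'$ the $k$ coordinates in $A$.

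On a fixed patch I would distinguish two regimes. If $\nabla_{x'}\fii_\xi$ stays bounded away from $0$ on the patch there is no critical point in the $x'$-variables, and repeated integration by parts against the operator $\tfrac{1}{\i t}\,|\nabla_{x'}\fii_\xi|^{-2}\nabla_{x'}\fii_\xi\cdot\nabla_{x'}$ yields $O(|t|^{-N})$ for every $N$, in particular a bound by $|t|^{-k/2}$; the boundary terms are absent because of $\chi_\alpha$, and each integration by parts transfers one derivative onto the amplitude and hence contributes to $\sum_{|l|\leq m}\|{\rm D}^l v\|_1$. If instead $\nabla_{x'}\fii_\xi$ vanishes somewhere, the invertible $k\times k$ block lets the implicit function theorem locate a single critical manifold $x'=x_0'(x'',\xi)$, near which I apply the $k$-dimensional stationary-phase (Van der Corput) lemma in $x'$; this produces the gain $C|t|^{-k/2}$ with $C$ controlled only by $\delta$ and by finitely many sup-norms of the derivatives of $\Lda$, while the error terms again cost finitely many derivatives of $v$. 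In either regime the remaining $x''$-integration is performed trivially, and summing over the finitely many patches gives the claim with some $m=m(n,k)$.

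The main obstacle is to make the stationary-phase step genuinely uniform in $\xi$, including the borderline case in which the critical point drifts to the edge of a patch as $\xi$ varies. The cleanest route is to invoke H\"ormander's oscillatory-integral lemma in the $k$ non-degenerate variables, whose constant depends only on a lower bound for the determinant of the $k\times k$ Hessian block and on a finite number of derivative bounds on the phase; since ${\rm Hess}(\fii_\xi)={\rm Hess}(\Lda)$ is $\xi$-independent, all of these inputs are uniform in $\xi$, which is precisely what yields the $\xi$-independent constant $C$ and the pure $(1+|t|)^{-k/2}$ decay of the statement. Equivalently, one may read the whole computation through the surface $S$ introduced above: the rank of ${\rm Hess}(\Lda)$ equals the rank of the second fundamental form of $S$, so ``rank at least $k$'' means $S$ has at least $k$ non-vanishing principal curvatures, and the decay in the $t$-component of $Y=(\xi,t)$ --- the direction along which $S$ curves --- is exactly the $|t|^{-k/2}$ furnished by Littman's surface-measure theorem.
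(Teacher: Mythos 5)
First, a structural remark: the paper does not prove Theorem \ref{ThmLittman} at all --- it is imported from Littman's paper, and everything done on the paper's side is the dictionary developed just before the statement (the surface $S$, the measure $\d S(X)$, and the identification of ${\rm rank}\,{\rm Hess}(\Lambda)$ with the number of non-vanishing principal curvatures of $S$), which turns the statement into Littman's theorem on the Fourier transform of a surface-carried measure. Your direct stationary-phase argument is therefore a genuinely different, self-contained route, and its central uniformity observation is correct: with phase $\fii_\xi(x)=\Lambda(x)+\tfrac{2\pi}{t}\,\xi\cdot x$ one has ${\rm Hess}(\fii_\xi)={\rm Hess}(\Lambda)$, so any stationary-phase bound whose constant depends only on a lower bound for a Hessian determinant and on finitely many derivative bounds of the phase is automatically uniform in $\xi$. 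Moreover, the uniform bound you need (valid whether or not the critical point lies in, or drifts to the edge of, the support) is true, although it requires slightly more than quoting H\"ormander's lemmas verbatim; one clean proof squares the integral, writes $|I|^2$ as a double integral in $x$ and the difference variable $h$, and applies non-stationary phase in $x$ for fixed small $h$, so the issue you flag at the end is indeed repairable.

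The genuine gap is earlier, in the localization step. You cover ${\rm supp}(v)$ by patches on each of which ``one fixed $k\times k$ submatrix of ${\rm Hess}(\Lambda)$, indexed by a $k$-element set $A$ of coordinates'' is invertible, and you then apply stationary phase in the variables $x'$ indexed by $A$; this uses invertibility of the \emph{principal} block $\partial^2_{x'x'}\Lambda$. But the hypothesis only bounds from below the maximum of \emph{all} $k\times k$ minors, whose row and column index sets may differ, and that does not produce a non-degenerate principal block of size $k$: take $\Lambda(x)=x_1x_2$ on $\R^2$ and $k=1$; then ${\rm Hess}(\Lambda)=\begin{pmatrix}0&1\\1&0\end{pmatrix}$ everywhere, the largest $1$-minor equals $1$, yet $\partial^2_1\Lambda=\partial^2_2\Lambda\equiv 0$, so on no patch and for no choice of a single coordinate can your one-variable stationary-phase step even be set up. The repair uses the symmetry of the Hessian: a real symmetric matrix of rank at least $k$ always has a non-singular principal $r\times r$ submatrix for some $r\geq k$ (the sum of all principal $r$-minors equals $e_r(\lambda_1,\dots,\lambda_n)$, which is the product of the non-zero eigenvalues when $r$ equals the rank), and a quantitative version of this fact --- a lower bound on the largest $k$-minor plus sup-norm bounds on ${\rm Hess}(\Lambda)$ force some principal $r$-minor, $r\geq k$, to be bounded below by a constant depending only on those data --- lets you run your argument in those $r$ variables, yielding the even stronger decay $(1+|t|)^{-r/2}\leq(1+|t|)^{-k/2}$ for $|t|\geq 1$. (Alternatively, perform a local orthogonal change of coordinates diagonalizing ${\rm Hess}(\Lambda)$ at a base point of each patch; the $L^\infty$ norm in $\xi$ is rotation invariant.) With this step supplied, and with the sup-norms of derivatives of the amplitude converted into the $L^1$ norms $\sum_{|l|\leq m}\left\|{\rm D}^l v\right\|_1$ via the compact support, your proof goes through.
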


For $x\in \R^n,$ denote $r= |x|$ and let $\lambda = \lambda(r)$ be a radial function on an open set $B\subset \R^n.$  
A direct computation shows that 
$$\frac{\partial^2 \lambda }{\partial x_j \partial x_k} = \frac{\lambda '}{r} \delta_{jk} +\left(\frac{\lambda ''}{r^2}- \frac{\lambda '}{r^3}\right) x_j x_k,$$
which implies that 
$${\rm Hess}(\lambda ) = \frac{\lambda '}{r} \left( I + \frac{\mu}{r^2}x x^\top\right),\quad \text{with} \quad \mu = \frac{r}{\lambda '}\left(\lambda '' - \frac{\lambda '}{r}\right).$$
Thus, we get $${\rm Det}({\rm Hess}(\lambda ))= \left( \frac{\lambda '}{r}\right)^n (1+\mu) =  \left( \frac{\lambda '}{r}\right)^{n-1} \lambda ''.$$
Let now $\delta \in [0, \frac{1}{2\sqrt{2}}]$ and consider the radial function $\lambda _\delta(r)= r\sqrt{1-\delta^2 r^2}$ defined on the 
set $A:=\left\{x; \,  \frac 12<  |x| < 2\right\}.$  For $r\neq 0,$ we have 
$$\lambda '(r) = \frac{1-2\delta^2 r^2}{\sqrt{1-\delta^2 r^2}}, \,\, \lambda ''(r)=\frac{ r \delta^2 (2 r^2 \delta^2-3)}{(1 - r^2 \delta^2)^{\frac 32}}.$$
It's easy then to check that for $\delta \in (0, \frac{1}{2\sqrt{2}}],$ we have 
${\rm Det}({\rm Hess}(\lambda _\delta))\neq 0$ and ${\rm rank}({\rm Hess}(\lambda _\delta)) = n.$ In the case where $\delta =0,$ we have 
$${\rm Hess}(f_0) = \frac 1r \left( I - \frac{1}{r^2}x x^\top\right).$$
A direct computation shows that $r\,{\rm Hess}(\lambda _0)$ is an idempotent matrix.  Since the rank of an idempotent matrix equals its trace, we can write $${\rm rank}({\rm Hess}(\lambda _0(|x|))) =  {\rm rank}(|x|\,{\rm Hess}(\lambda _0(|x|))) = {\rm Tr}\left( I - \frac{1}{r^2}x x^\top\right) = n-1,$$
for any $x\in A.$ On the other hand, all derivatives $\frac{\partial \lambda }{\partial x_k}$ are bounded on $A$  uniformly with $\delta \in [0, \frac{1}{2\sqrt{2}}],$  and the maximum of the absolute values of the $n-1$order minors of ${\rm Hess}(\lambda )$ has a positive lower bound on $A$ independent of $\delta.$
Using now Theorem \ref{ThmLittman}, we get 
\begin{corollary}\label{CorLittman}
For any $v\in C_\ci^\infty(\R^n)$ with ${\rm supp}(v)\subseteq A,$ there exists a constant $C>0$ such that
for some $m\in \N$ 
\begin{equation}
\left\|\mathfrak{F}^{-1}\left(\sin(t \lambda _\delta) v\right)\right\|_\infty \leq C(1+|t|)^{-\frac{n-1}{2}}\sum_{|l| \leq m}\left\|{\rm D}^l v\right\|_1,
\end{equation}
uniformly with $\delta \in [0, \frac{1}{2\sqrt{2}}].$
\end{corollary}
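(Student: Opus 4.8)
The plan is to reduce the estimate for $\sin(t\lambda_\delta)$ to two applications of Littman's theorem (Theorem \ref{ThmLittman}) by expressing the sine as a difference of complex exponentials. I would begin from the identity
$$\mathfrak{F}^{-1}\left(\sin(t\lambda _\delta) v\right) = \frac{1}{2\i}\left[\mathfrak{F}^{-1}\left(\e^{\i t \lambda _\delta} v\right) - \mathfrak{F}^{-1}\left(\e^{-\i t \lambda _\delta} v\right)\right],$$
so that the triangle inequality yields
$$\left\|\mathfrak{F}^{-1}\left(\sin(t\lambda _\delta) v\right)\right\|_\infty \leq \frac{1}{2}\left(\left\|\mathfrak{F}^{-1}\left(\e^{\i t \lambda _\delta} v\right)\right\|_\infty + \left\|\mathfrak{F}^{-1}\left(\e^{-\i t \lambda _\delta} v\right)\right\|_\infty\right).$$
Each term on the right is exactly of the form treated by Theorem \ref{ThmLittman}, with phase $\Lambda = \pm\lambda _\delta$.

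Next I would verify the hypotheses of Theorem \ref{ThmLittman} with the choice $k = n-1$. The computation preceding the corollary shows that ${\rm rank}({\rm Hess}(\lambda _\delta)) = n$ for $\delta \in (0, \frac{1}{2\sqrt{2}}]$ and ${\rm rank}({\rm Hess}(\lambda _0)) = n-1$; in every case the rank is at least $n-1$ on $A \supseteq {\rm supp}(v)$, so the theorem applies with $k = n-1$ and produces the decay rate $(1+|t|)^{-(n-1)/2}$. Since ${\rm Hess}(-\lambda _\delta) = -{\rm Hess}(\lambda _\delta)$ has the same rank and the same absolute minors as ${\rm Hess}(\lambda _\delta)$, and since replacing $t$ by $-t$ leaves $1+|t|$ unchanged, the second term obeys the identical bound. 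Combining the two estimates collapses the $\tfrac 12$-weighted sum into a single constant and delivers the claimed inequality, with $m$ taken as the order furnished by the theorem.

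The crux of the argument—and its only delicate point—is the \emph{uniformity} of the resulting constant in $\delta$. Theorem \ref{ThmLittman} yields a constant depending on bounds for the derivatives of the phase and on a positive lower bound for the maximum of the absolute values of the $(n-1)$-order minors of the Hessian on ${\rm supp}(v)$, and both ingredients must be controlled uniformly over $\delta \in [0, \frac{1}{2\sqrt{2}}]$. For the derivative bounds this follows from the explicit form $\lambda _\delta(r) = r\sqrt{1-\delta^2 r^2}$: on $A$ one has $\delta^2 r^2 \leq \frac 12$, so the radicand stays bounded away from zero and every derivative of $x \mapsto \lambda _\delta(|x|)$ is a continuous function of $(r,\delta)$ on the compact set $\overline{A} \times [0, \frac{1}{2\sqrt{2}}]$, hence uniformly bounded (no singularity arises from the factors $x_j/r$ since $r > \frac 12$ on $A$). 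For the lower bound on the $(n-1)$-minors I would invoke the observation already recorded before the corollary, namely that this quantity admits a positive infimum on $A$ independent of $\delta$—the degenerate endpoint $\delta = 0$ being precisely the case that forces $k = n-1$ rather than $k = n$. With these two uniform bounds in hand, the constant $C$ supplied by Theorem \ref{ThmLittman} may be chosen independent of $\delta$, which is exactly the assertion of the corollary.
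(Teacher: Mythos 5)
Your proposal is correct and follows essentially the same route as the paper: the paper derives the corollary directly from Theorem \ref{ThmLittman} with $k=n-1$, relying on the computations immediately preceding it (rank of ${\rm Hess}(\lambda_\delta)$ at least $n-1$ on $A$, derivative bounds and the positive lower bound on the $(n-1)$-order minors uniform in $\delta$), with the decomposition $\sin(t\lambda_\delta)=\frac{1}{2\i}\bigl(\e^{\i t\lambda_\delta}-\e^{-\i t\lambda_\delta}\bigr)$ left implicit. Your write-up simply makes that implicit step and the uniformity bookkeeping explicit, which matches the paper's intent exactly.
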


\section{Main results}
Writing \eqref{StrongDdKG} as a first
order system, we find that its  abstract mild formulation is given by 
\begin{equation}\label{MildForm}
\Psi(t+\tau) =  \mathcal{T_\dta}(t) \Psi_0 + \int_0^\tau  \mathcal{T_\dta}(\tau - s) F(\Psi(t + s))\d s,
\end{equation}
where $\Psi := \left(\begin{matrix}\psi\\ \partial_t \psi \end{matrix}\right),$ 
$F(\Psi)  := \left(\begin{matrix}0\\  f(\psi) \end{matrix}\right)$ and 
 $ \mathcal{T_\dta}(t)$ is the semi-group generated by the linear part of the equation and is given by 
\begin{gather*}
  \mathcal{T_\dta}(t) := \e^{\dta t \Delta  }\left( \begin{matrix}
  \cosh(t \Lda_\dta) - \dta \Delta \Lda_\dta^{-1} \sinh(t \Lda_\dta) &
  \Lda_\dta^{-1} \sinh(t \Lda_\dta) \\  \Delta \Lda_\dta^{-1} \sinh(t
  \Lda_\dta) &   \cosh(t \Lda_\dta) + \dta \Delta \Lda_\dta^{-1}
  \sinh(t \Lda_\dta)\end{matrix}\right) , \\
  \Lda_\dta := \sqrt{\dta^2 \Delta^2 + \Delta}.
\end{gather*}
Our main result is the following 
\begin{theorem}\label{ThmEstimBesov}
For any $(\sig, \alpha, q) \in \R^3,$  satisfying  $\frac{2n+2}{n+3}\leq p\leq 2,$ $q\geq 1$ and $\sig>0,$ there exists  a  constant 
$ C>0$ independent of $\dta$  such that 

\begin{equation}\label{MainEstim}
\left\| \e^{\dta t \Delta}  \Lda_\dta^{-1} \sinh(t \Lda_\dta) v \right\|_{B_{p' q}^\sig} \leq  C t^{1-2n(\frac 1p -\frac 12 )} \max\left( t^{\sig}, t^{-\sig}\right)  \| v\|_{B_{p q}^\sig}\quad \forall v\in B_{p q}^\sig.
\end{equation}
\end{theorem}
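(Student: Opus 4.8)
The plan is to view the operator as a radial Fourier multiplier and to exploit the dyadic decomposition built into the Besov norm. Since $\Delta$ has symbol $-|\xi|^2$, the operator $\Lda_\dta^2=\dta^2\Delta^2+\Delta$ has symbol $\dta^2|\xi|^4-|\xi|^2=-\lambda_\dta(|\xi|)^2$, where $\lambda_\dta(r)=r\sqrt{1-\dta^2r^2}$ is exactly the radial phase analysed in Corollary~\ref{CorLittman}. Consequently, on the low-frequency set $\dta|\xi|<1$ the operator acts as multiplication by
\[
m_t(\xi)=\e^{-\dta t|\xi|^2}\,\frac{\sin\!\big(t\lambda_\dta(|\xi|)\big)}{\lambda_\dta(|\xi|)},
\]
while on $\dta|\xi|>1$ the square root becomes real and $\sin$ is replaced by $\sinh$. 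Using the almost-orthogonality already established in Lemma~\ref{LemLp'pEstim}, it suffices to control, uniformly in $j$, the $L^p\to L^{p'}$ operator norm $N_j(t)$ of the frequency-localised multiplier $m_t\,\varphi_j$, and then to sum the resulting bounds against the weights $2^{j\sig q}$.

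On the shell where $\varphi_j$ lives I would rescale $\xi=2^j\eta$. The scaling identity $\lambda_\dta(2^jr)=2^j\lambda_{2^j\dta}(r)$ reduces the problem at frequency $2^j$ to one at unit frequency, with effective damping $\tilde\dta=2^j\dta$ and rescaled time $\tau=2^jt$. To bound $N_j(t)$ I would interpolate, through Lemma~\ref{LemLp'pInterp}, between the elementary $L^2\to L^2$ bound $\sup_\xi|m_t(\xi)|\le C\min(t,2^{-j})$ and the $L^1\to L^\infty$ bound coming from the kernel. The latter is where Corollary~\ref{CorLittman} is used: after rescaling, the kernel of $m_t\varphi_j$ equals $2^{j(n-1)}\,\mathfrak{F}^{-1}\!\big(\sin(\tau\lambda_{\tilde\dta})\,g\big)(2^jy)$ with amplitude $g=\e^{-\tilde\dta\tau|\eta|^2}\varphi/\lambda_{\tilde\dta}$ supported in $A$, so the Van der Corput estimate supplies the dispersive factor $(1+\tau)^{-(n-1)/2}$. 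Interpolation then yields
\[
N_j(t)\le C\,2^{\,j(n\alpha-1)}\,(1+\tau)^{-\frac{(n-1)\alpha}{2}}\,\min(\tau,1)^{1-\alpha},\qquad \alpha=\tfrac2p-1 .
\]
A short computation shows the exponent governing the large-$j$ behaviour equals $\alpha\tfrac{n+1}{2}-1$, which is non-positive exactly when $p\ge\frac{2n+2}{n+3}$; this is precisely the lower threshold in the statement and is what makes the dyadic sum converge.

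The uniformity in $\dta$ is the heart of the matter, and it forces a split of the dyadic sum at the threshold $\tilde\dta=2^j\dta\sim1$. On the low shells, where $\tilde\dta$ stays in the admissible range $[0,\tfrac1{2\sqrt2}]$ of Corollary~\ref{CorLittman}, the above applies verbatim; one checks that the Littman constant is uniform because differentiating the Gaussian amplitude $g$ only produces powers of $\tilde\dta\tau$, which are absorbed by the decay $\e^{-c\tilde\dta\tau}$ (recall $|\eta|\ge\tfrac12$ on $A$). On the high shells, where $\tilde\dta>\tfrac1{2\sqrt2}$ and $\Lda_\dta$ is real, I would instead exploit the parabolic prefactor: writing the symbol as $\e^{-\dta t|\xi|^2}\sinh(t\omega_\dta)/\omega_\dta$ with $\omega_\dta(|\xi|)=|\xi|\sqrt{\dta^2|\xi|^2-1}$, the elementary inequality $\dta|\xi|^2-\omega_\dta>0$ converts the prefactor into genuine exponential decay in $t$, which simultaneously supplies the derivative bounds and the summability needed. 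Assembling the two regimes and summing against the $2^{j\sig q}$ weights, with the critical scale $2^j\sim t^{-1}$ at which $\tau$ crosses unity, produces the dispersive rate $t^{1-2n(1/p-1/2)}$; the factor $\max(t^\sig,t^{-\sig})$ records the two weighted ranges $2^jt\le1$ and $2^jt\ge1$.

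The step I expect to be the main obstacle is precisely this uniform-in-$\dta$ bookkeeping across the transition $2^j\dta\sim1$. Away from it one has either a clean dispersive ($\sin$) estimate or a clean parabolic ($\sinh$ dominated by the Gaussian) estimate, but in the transition zone $\omega_\dta$ degenerates to $0$, and one must verify that $\sinh(t\omega_\dta)/\omega_\dta\to t$ combines with $\e^{-\dta t|\xi|^2}$ to remain bounded uniformly, and that the Littman constant does not deteriorate as the smallest curvature of the surface attached to $\lambda_\dta$ shrinks near $\tilde\dta=\tfrac1{2\sqrt2}$. Securing these constants uniformly in $\dta$, together with checking that the threshold $p\ge\frac{2n+2}{n+3}$ indeed survives the interpolation on every shell, is where the genuine work lies.
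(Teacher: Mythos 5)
Your ingredients for the shells $j\ge 1$ coincide with the paper's: the reduction via Lemma~\ref{LemLp'pEstim}, the Riesz--Thorin interpolation of Lemma~\ref{LemLp'pInterp} between a Littman kernel bound (Corollary~\ref{CorLittman}) and a symbol bound, the split at $2^{j}\dta\sim 1$ with Gaussian decay on the high shells, and the arithmetic $\alpha\,\tfrac{n+1}{2}-1\le 0\Leftrightarrow p\ge\tfrac{2n+2}{n+3}$ (your interpolation exponents --- kernel to the power $\alpha$, symbol to the power $1-\alpha$ --- are the correct ones, and are also what the paper actually uses). The genuine gap is the block $j=0$. Since $\varphi_0$ is supported in the ball $\{|\xi|\le 2\}$, not in an annulus, both of your per-shell inputs fail there once $t$ is large: the symbol bound $\sup|m_t\varphi_0|\le C\min(t,2^{-j})=C$ is false, because $\lambda_\dta(|\xi|)\to 0$ as $\xi\to 0$ and hence $\sup|m_t\varphi_0|\sim t$; and Corollary~\ref{CorLittman} is inapplicable, because the amplitude $\varphi_0/\lambda_{\dta}$ has a $|\xi|^{-1}$ singularity at the origin and so is not in $C_\ci^\infty(\R^n)$ (the Hessian bounds of the phase also degenerate there). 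Quantitatively, testing on $v$ with $\hat v$ a bump of width $t^{-1}$ near the origin (take $\dta=0$) gives $N_0(t)\gtrsim t^{1-n\alpha}$, which contradicts your claimed $N_0(t)\lesssim (1+t)^{-\frac{(n-1)\alpha}{2}}$ for every $p>\tfrac{2n+2}{n+3}$ (for $p=2$: claimed $O(1)$, true $\sim t$). Since in Lemma~\ref{LemLp'pEstim} the weights $2^{j\sig q}$ cancel between the two sides, your dyadic sum is governed by $\sup_j N_j(t)$, so this single block breaks the argument; it also shows that nothing in your proof can actually produce the factor $\max(t^{\sig},t^{-\sig})$ --- with uniform-in-$j$ constants the weights cancel identically, so ``recording the two weighted ranges'' is not a mechanism.

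What your proposal is missing is exactly how the paper treats the low frequencies: it proves \eqref{MainEstim} only for $t=1$, where the $j=0$ block is trivial (all bounds are $O(1)$), uniformly in $\dta\ge 0$, and then deduces the case of general $t>0$ from the exact scaling identity $t\,\beta_\dta(r)=\beta_{\dta/t}(t r)$, which turns the propagator at time $t$ with damping $\dta$ into the time-$1$ propagator with damping $\dta/t$ conjugated by dilations. This is precisely where the uniformity in $\dta$ of the $t=1$ estimate is spent, and the factor $\max(t^{\sig},t^{-\sig})$ --- together with the hypothesis $\sig>0$, which your argument never uses --- comes from the dilation property of the inhomogeneous Besov norm, applied once to the output and once to $v(t\,\cdot)$; it is the price of moving the mass of $\varphi_0$ across dyadic scales, i.e.\ exactly the low-frequency difficulty you skipped. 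If you insist on the direct time-dependent route, you would have to decompose $\varphi_0$ further into shells $|\xi|\sim 2^{-k}$, $k\ge 0$, and reassemble them inside the inhomogeneous norm; this can be made to work for $p$ strictly above the threshold, but it is delicate at the endpoint $p=\tfrac{2n+2}{n+3}$, which the theorem includes, whereas the paper's scaling step bypasses the issue entirely.
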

\begin{proof}
First,  we prove \eqref{MainEstim} for $t=1.$
Denote $\beta_\delta(r) = r\sqrt{\delta^2 r^2-1}$ and  $\delta_j = \delta 2^j.$   Then we have 
$$ \left\| \mathfrak{F}^{-1}\left( \e^{-\dta  \lambda^2_0}\frac{\sinh(  \beta_\delta )}{\beta_\delta} \varphi_j\right) \right\|_\infty = 2^{jn}\left\| \mathfrak{F}^{-1}\left( \e^{-\dta 2^{2j} \lambda^2_0}\frac{\sinh( 2^j \beta_{\delta_j} )}{2^j\beta_{\delta_j}} \varphi\right) \right\|_\infty.$$
We consider first all $j\in \N^\ast$ such that  $\dta_j \leq \frac{1}{2 \sqrt{2}}$.  We have 
$$\frac{\sinh( 2^j  \beta_{\delta_j} )}{\beta_{\delta_j}} =\frac{\sin( 2^j\lambda_{\delta_j} )}{\lambda_{\delta_j}}\quad \text{on} \quad A.$$
The function $\frac{\e^{-\dta 2^{2j} \lambda^2_0} \varphi}{\lambda_{\delta_j}}$ is in $C_\ci^\infty(\R^n)$ with support in $ A.$
Moreover,  for any $m\in \N,$ there exists  $C_1 >0$   depending only on $\varphi$ such that 
$$\sum_{|l|\leq m}\left\| {\rm D}^l \left(\frac{\e^{-\dta 2^{2j} \lambda^2_0} \varphi}{\lambda_{\delta_j}} \right)\right\|_1 \leq   C_1.$$
Hence,  Corollary~\ref{CorLittman} implies 
\begin{equation}\label{PreEstim1}
\left\| \mathfrak{F}^{-1}\left( \e^{-\dta 2^{2j} \lambda^2_0}\frac{\sinh( 2^j \beta_{\delta_j} )}{2^j\beta_{\delta_j}} \varphi\right) \right\|_\infty \leq C_1 2^{-\frac 12 (n+1)j}
\end{equation}
Let now $j\in \N^\ast$ such that $\dta_j > \frac{1}{2 \sqrt{2}}.$  We can write 
$$\left\| \mathfrak{F}^{-1}\left( \e^{-\dta 2^{2j} \lambda^2_0}\frac{\sinh( 2^j \beta_{\delta_j} )}{2^j\beta_{\delta_j}} \varphi\right) \right\|_\infty \leq \left\| \e^{-\dta 2^{2j} \lambda^2_0}\frac{\sinh( 2^j \beta_{\delta_j} )}{2^j\beta_{\delta_j}} \right\|_{L^\infty(A)}\|\varphi\|_1.$$
There exist two constants $C_2, d >$ independent of $\dta$ such that 
$$\left\| \e^{-\dta 2^{2j} \lambda^2_0}\frac{\sinh( 2^j \beta_{\delta_j} )}{2^j\beta_{\delta_j}} \right\|_{L^\infty(A)} \leq C_2 2^{-2 j}.$$
Thus, we get finally 
\begin{equation}\label{phi_j1}
\left\| \mathfrak{F}^{-1}\left( \e^{-\dta \lambda^2_0}\frac{\sinh(  \beta_\delta )}{\beta_\delta} \varphi_j\right) \right\|_\infty  \leq  \begin{cases}
C_1 2^{\frac 12 (n-1)j}&\text{for  $\dta_j \leq \frac{1}{2 \sqrt{2}}$}\\
C_2 2^{(n-2) j} &\text{for  $\dta_j > \frac{1}{2 \sqrt{2}}$}.
\end{cases}
\end{equation}
On the other hand,  there exist $C_3, C_4 >$ depending on $\varphi$ such that 
\begin{equation}\label{phi_j2}
\left\|  \e^{-\dta \lambda^2_0}\frac{\sinh( \beta_\delta )}{\beta_\delta} \varphi_j \right\|_\infty \leq  \begin{cases}
C_3 2^{-j}&\text{for  $\dta_j \leq \frac{1}{2 \sqrt{2}}$}\\
C_4 2^{-2 j} &\text{for  $\dta_j > \frac{1}{2 \sqrt{2}}$}.
\end{cases}
\end{equation}
Finally, it is easy to check that \eqref{phi_j1} and \eqref{phi_j2} are also valid for $j=0$ with upper bounds independent of $\dta. $
Then, using Lemma \ref{LemLp'pInterp}, we find that there exists  $C> 0$ independent of $\dta$ such that 
\begin{equation}\label{Estim2}
\left\| \mathfrak{F}^{-1}\left( \e^{-\dta 2^{2j}\lambda^2_0}\frac{\sinh(\beta_{\delta} )}{\beta_{\delta}} \varphi_j \hat v\right) \right\|_{p'}
\leq C  \|v\|_p,
\end{equation}
provided that 
$$-(1-\alpha)+ \frac  \alpha2 (n-1) \leq 0,\quad -2(1-\alpha)+ \alpha  (n-2) \leq 0$$
and $$p :=  \frac{2}{\alpha+1} $$ 
which equivalently means $\frac{2n+2}{n+3}\leq p\leq 2.$  Finally,  estimate \eqref{MainEstim} for $t=1$ follows by the application of Lemma \ref{LemLp'pEstim}.  Namely, we get 
\begin{equation}\label{MainEstimt=1}
\left\| \e^{\dta \Delta}  \Lda_\dta^{-1} \sinh( \Lda_\dta) v \right\|_{B_{p' q}^\sig} \leq  \tilde C C  \| v\|_{B_{p q}^\sig}\quad \forall v\in B_{p q}^\sig,
\end{equation}
where $\tilde C$ and $C$ don't depend on $\dta.$
Let's now $t>0.$ Noting that 
$$t \beta_\dta(r) = \beta_{\dta/t}(t\,r)$$
and using the elementary property 
$$\widehat{f(t\cdot)}(\xi)  = t^{-n} \hat f\left(t^{-1}\xi \right),$$
we apply the change of variable $t\,\xi \to \xi$,  so that 
 \begin{align*}
  \mathfrak{F}^{-1}\left( \e^{-\dta t\,  \lambda^2_0}\frac{\sinh( t\, \beta_\delta )}{\beta_\delta} \hat v\right)(x) &=
t\, \int_{\R^n} \e^{2\pi \i x\cdot \xi -\frac{\dta}{t} |t \xi|^2}\frac{\sinh( \beta_\frac{\dta}{t}( |t \xi|) )}{\beta_\frac{\dta}{t}(|t\xi|)} \hat v(\xi) \d \xi\\
&= t^{1-n}  \int_{\R^n} \e^{2\pi \i t^{-1}\, x\cdot \xi -\frac{\dta}{t} |\xi|^2}\frac{\sinh( \beta_\frac{\dta}{t}( |\xi|) )}{\beta_\frac{\dta}{t}(|\xi|)} \hat v(t^{-1}\xi) \d \xi\\
& =  t^{1-2 n } \left[ \e^{\frac{\dta}{t} \Delta}  \Lda^{-1}_\frac{\dta}{t} \sinh\left( \Lda_\frac{\dta}{t}\right) v(t\cdot)\right]\left(t^{-1} x\right).
 \end{align*}
 Using now \eqref{MainEstimt=1} together with the dilation property of the Besov norm in \cite{Triebel:1992:TheoFnSpcII} (Pages 98-100), we find  that for any $\sig >0,$ there exists a constant independent of  $\dta$  such that 
 \begin{align*}
  \left\| \e^{\dta t\, \Delta}  \Lda_\dta^{-1} \sinh( t\, \Lda_\dta) v \right\|_{B_{p' q}^\sig}
   &=  t^{1-2 n } \left\|  \left[ \e^{\frac{\dta}{t} \Delta}  \Lda^{-1}_\frac{\dta}{t} \sinh\left( \Lda_\frac{\dta}{t}\right) v(t\cdot)\right]\left(t^{-1}\, \cdot\right)\right\|_{B_{p' q}^\sig}\\
   &\leq C t^{1-2 n }  \max\left( t^{\frac{n}{p'}}, t^{\frac{n}{p'}-\sig}\right)\| v(t\,\cdot)\|_{B_{p q}^\sig}\\
   &\leq C t^{1-2 n }  \max\left( t^{\frac{n}{p'}}, t^{\frac{n}{p'}-\sig}\right) \max\left( t^{-\frac np}, t^{\sig-\frac np}\right)\|v\|_{B_{p q}^\sig}\\
   & = C t^{1-2 n(\frac 1p -\frac 12) }  \max\left( t^{\sig}, t^{-\sig}\right)\|v\|_{B_{p q}^\sig},
 \end{align*}
which completes the proof
\end{proof}

\begin{remark}
The assumption $\sig>0$ is needed for the dilatation property of the Besov space.  Otherwise,  for $t= 1,$ estimate \eqref{MainEstim}
can be extended to any $\sig\in \R.$ 
\end{remark}
Using the following continuous embeddings (see Theorem 2.40 in \cite{BahouriCheminDanchin:2011:FourierNPDEs} and Theorem 15 in \cite{Taibleson:1964:TheoLipDistr})
\begin{equation}
L^p \subset B_{p 2}^0 \quad \text{and} \quad   B_{p' 2}^0 \subset L^{p'}\quad \text{for all} \quad p\in (1, 2],
\end{equation}
we find 
\begin{corollary}\label{CorLpLqEstim}
Let $n>1,$ $\frac{2n+2}{n+3}\leq p\leq 2$ and $\dta \geq 0.$ Then,  there exists 
$ C>0$ independent of $\dta$  such that 

\begin{equation}\label{StrichMainEstim}
\left\| \e^{\dta t \Delta}  \Lda_\dta^{-1} \sinh(t \Lda_\dta) v \right\|_{p'} \leq  C t^{1-2 n(\frac 1p -\frac 12) } \| v\|_p\quad \forall v\in L^p.
\end{equation}
\end{corollary}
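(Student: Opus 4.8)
The plan is to sandwich the $L^{p'}$ norm on the left of \eqref{StrichMainEstim} between two Besov norms by means of the stated embeddings, thereby reducing the claim to a Besov estimate at regularity $\sig=0$ and summability $q=2$. Since the range $\frac{2n+2}{n+3}\leq p\leq 2$ with $n>1$ forces $p\in(1,2]$ and hence $p'\in[2,\infty)$, the embedding $B_{p'2}^0\subset L^{p'}$ gives
$$\left\|\e^{\dta t\Delta}\Lda_\dta^{-1}\sinh(t\Lda_\dta)v\right\|_{p'}\leq C\left\|\e^{\dta t\Delta}\Lda_\dta^{-1}\sinh(t\Lda_\dta)v\right\|_{B_{p'2}^0},$$
while $L^p\subset B_{p2}^0$ will be used on the right to pass from $\|v\|_{B_{p2}^0}$ back to $\|v\|_p$. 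Everything thus reduces to controlling the operator on $B^0_{\cdot\,2}$ uniformly in $\dta$.

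The one subtlety is that Theorem \ref{ThmEstimBesov} is proved only for $\sig>0$, whereas the embeddings force $\sig=0$; I would therefore \emph{not} invoke \eqref{MainEstim} at a general time, but proceed in two stages. First, at $t=1$ the Besov bound \eqref{MainEstimt=1} is available for every $\sig\in\R$ (as recorded in the Remark), in particular for $\sig=0$, $q=2$; combining it with the two embeddings yields the time-one estimate
$$\left\|\e^{\dta\Delta}\Lda_\dta^{-1}\sinh(\Lda_\dta)v\right\|_{p'}\leq C\|v\|_p,\qquad v\in L^p,$$
with $C$ independent of $\dta\geq 0$. This is the only place where the Littman machinery of Sections 2--3 enters.

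The general time $t>0$ is then reached purely by homogeneity, and it is advantageous to rescale at the Lebesgue level rather than through the (at $\sig=0$ delicate) dilation property of Besov spaces. Using the relation $t\,\beta_\dta(r)=\beta_{\dta/t}(t r)$ together with the Fourier rescaling $\widehat{v(t\cdot)}(\xi)=t^{-n}\hat v(t^{-1}\xi)$, the substitution $\xi\mapsto t^{-1}\xi$ gives the exact identity $[\e^{\dta t\Delta}\Lda_\dta^{-1}\sinh(t\Lda_\dta)v](x)=t\,[\e^{\frac{\dta}{t}\Delta}\Lda_{\dta/t}^{-1}\sinh(\Lda_{\dta/t})(v(t\cdot))](t^{-1}x)$, expressing the propagator at time $t$ as a rescaled copy of the propagator at time $1$ with damping $\dta/t$. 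Taking the $L^{p'}$ norm, using $\|G(t^{-1}\cdot)\|_{p'}=t^{n/p'}\|G\|_{p'}$, applying the time-one estimate above (with damping $\dta/t\geq 0$, for which the constant is unchanged), and finally $\|v(t\cdot)\|_p=t^{-n/p}\|v\|_p$, the powers of $t$ combine as $1+\frac{n}{p'}-\frac np=1+n-\frac{2n}{p}=1-2n(\frac1p-\frac12)$, which is exactly the exponent in \eqref{StrichMainEstim}. The hard part is precisely this bookkeeping: one must verify that the time-one constant is genuinely $\dta$-uniform (so that replacing $\dta$ by $\dta/t$ costs nothing) and that the Lebesgue homogeneities assemble into the advertised power of $t$; the hypotheses $n>1$ and $\frac{2n+2}{n+3}\leq p\leq 2$ are inherited from Theorem \ref{ThmEstimBesov} and guarantee both that the range of $p$ is nonempty and that the two embeddings apply.
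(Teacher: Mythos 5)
Your proof is correct, but it does not follow the paper's route exactly. The paper obtains the corollary in one line, by combining the embeddings $L^p\subset B^0_{p2}$ and $B^0_{p'2}\subset L^{p'}$ with the statement of Theorem \ref{ThmEstimBesov} at general $t$ --- which, read literally, is only available for $\sig>0$, while the embeddings force $\sig=0$. You instead use only the time-one Besov bound \eqref{MainEstimt=1} at $\sig=0$, $q=2$ (legitimate, since that bound comes from Lemma \ref{LemLp'pEstim} and holds for every $\sig\in\R$, as the paper's Remark records), convert it to a time-one $L^p\to L^{p'}$ bound via the embeddings, and then reach general $t>0$ by redoing the rescaling argument at the Lebesgue level, where dilation is exact homogeneity, rather than through the Besov dilation property that needs $\sig>0$. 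What each approach buys: the paper's deduction is shorter and presents the corollary as an immediate consequence of the main theorem, but it carries the $\sig=0$ versus $\sig>0$ mismatch; yours closes that gap and delivers the clean power $t^{1-2n(\frac 1p-\frac 12)}$ with no $\max\left(t^{\sig},t^{-\sig}\right)$ loss, at the cost of repeating the change-of-variables computation. Two details in your favor are worth recording. First, your rescaling identity carries the prefactor $t$, which is the correct one: the paper's intermediate display shows $t^{1-2n}$, yet its final exponent is only consistent with prefactor $t$, since $t\cdot t^{\frac{n}{p'}}\cdot t^{-\frac np}=t^{1+n-\frac{2n}{p}}=t^{1-2n(\frac 1p-\frac 12)}$; so your computation silently repairs a typo. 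Second, you correctly identify that the entire argument hinges on the $\dta$-uniformity of the constant in \eqref{MainEstimt=1}, because the rescaling replaces the damping $\dta$ by $\dta/t$, which sweeps over all of $[0,\infty)$ as $t$ varies; without that uniformity the reduction to $t=1$ would be vacuous.
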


\section{Discussion}
For $\dta >0$ the estimate in \eqref{PreEstim1} could be improved,  so that the condition on $p$ be better.  However,  in this case we lose  the uniformity with respect to $\dta$ and our technique of change of variable won't be valid. 
For bounded $t\in (0, T]$ the time depending factor in \eqref{MainEstim} could be replaced by 
$$C t^{1-2n(\frac 1p -\frac 12 )-\sig}$$
with a suitable choice of $C.$

In the estimate \eqref{StrichMainEstim} the spacial integrability is increased from $p$ to $p'.$ However,   there is no gain in the Besov smoothness $\sig$ in the original estimate \eqref{MainEstimt=1}.  This can be improved by changing  the the constant in the assumption \eqref{Estim1} of Lemma \ref{LemLp'pEstim} so that it might be proportional to $2^{j\gamma}$ with some $\gamma \leq 0.$ In this case 
it could be proved that $v\mapsto \mathfrak{F}^{-1}(h \hat v)$ maps continuously $B_{pq}^{\sig+\gamma}$ into $B_{p'q}^{\sig}$
which could be transmitted further to the estimate resulting from  \eqref{Estim2} for which 
$$\gamma =  \frac  \alpha2 (n+1) -1 = (n+1)\left(\frac 1p -\frac 12\right) -1 \leq 0.$$

Using estimate \eqref{StrichMainEstim} in a classical argument to  establish well posedness results for  \eqref{MainEq} with nonlinearities of type \eqref{NonLin} in the energy space $H^1(\R^n) \cap L^{r+1}(\R^n) \times L^2(\R^n)$ would require estimating the $L^{pr}(\R^n)$ norms of the supposed  solution in terms of the $H^1(\R^n)$ norms. In other words, this would require the validity of the Sobolev embedding $H^1(\R^n) \subset L^{pr}(\R^n)$ or equivalently $pr \leq \frac{2n }{n-2}.$ Thus,   $r$ would have the upper bound 
$$r\leq  \frac{2n }{(n-2)p} \leq \frac{n (n+3) }{(n+1)(n-2)}= 1+ \frac{4n +2}{(n+1)(n-2)}.$$ In particular,  for $\dta >0,$ this upper bound can be a bit larger. Indeed,  ${\rm rank}({\rm Hess}(\lambda _\delta)) = n$ and thus the term $-\frac{n-1}{2}$ in Corollary \ref{CorLittman} can be replaced by  $-\frac n2$ which in terns leads to the lower bound $\frac{2n}{n+2}$ of $p$ and then we get $$r\leq 1+\frac{4}{n-2}.$$
This coincides with the nonlinearity considered by \cite{Inui:2019:StrichartzEstimWDWE} and \cite{Watanabe:2017:StrichartzEstimWDWEd23} for the weakly damped wave equations. 

Finally,  the main result can be extended to more general strongly damping cases for which the damping operator is given by
$$ A v = \mathfrak{F}^{-1}(a \hat v),$$
where $\xi \mapsto a(\xi)$ possesses the same properties of $\xi\mapsto \dta |\xi|^2$ (The Fourier multiplier of $-\dta \Delta$) that make the whole analysis  work.

\bibliographystyle{siam}
\bibliography{Dpdkg}

\end{document}